\documentclass[12pt]{amsart}
\parindent0cm
%\frenchspacing 
\usepackage{amsmath, amssymb, amscd, a4wide}

\parindent=16pt

\usepackage[colorlinks=true,linkcolor=blue,citecolor=blue,pdfpagelabels=false]{hyperref}
\pdfstringdefDisableCommands{\let\uppercase\@firstofone}

\setlength{\parskip}{0.1cm}

\numberwithin{equation}{section}

\theoremstyle{plain}
\newtheorem{thm}{Theorem}[section]
\newtheorem{lem}[thm]{Lemma}

\newcommand{\thmref}[1]{Theorem~\ref{#1}}
\newcommand{\lemref}[1]{Lemma~\ref{#1}}

\theoremstyle{definition}
\newtheorem{rmk}[thm]{Remark}

\newcommand{\rmkref}[1]{Remark~\ref{#1}}

\newcommand{\mbb}{\mathbb}
\newcommand{\x}{\textbf}
\newcommand{\mf}{\mathbf}
\newcommand{\q}{\quad}

\newcommand{\mc}{\mathcal}

\newcommand{\mrm}{\mathrm}

\begin{document}

\title[$L^\infty$ norms of holomorphic modular forms]{$L^\infty$ norms of holomorphic modular forms in the case of compact quotient}

\author{Soumya Das} 
\address{Department of Mathematics\\
Indian Institute of Science\\
Bangalore -- 560012, India.}
\email{somu@math.iisc.ernet.in}

\author{Jyoti Sengupta}
\address{School of Mathematics\\
Tata Institute of Fundamental Research\\
Homi Bhabha Road\\
Mumbai -- 400005, India.}
\email{sengupta@math.tifr.res.in}

%\date{\today}
\subjclass[2000]{Primary 11F11; Secondary 11F12} 
\keywords{Sup norm, Sub convexity bound, Compact quotient.}

\begin{abstract}
We prove a sub-convex estimate for the sup-norm of $L^2$-normalized holomorphic modular forms of weight $k$ on the upper half plane, with respect to the unit group of a quaternion division algebra over $\mf Q$. More precisely we show that when the $L^2$ norm of an eigenfunction $f$ is one,\[ \| f \|_\infty \ll k^{\frac{1}{2} - \frac{2}{131} +\varepsilon}\  \] for any $\varepsilon>0$ and for all $k$ sufficiently large.
\end{abstract}
\maketitle 

\section{Introduction}
The supremum norm of cusp forms has been a topic of considerable interest in the recent past. Let us first look at the case of holomorphic cusp forms of weight $k$ for the full modular group, $SL(2,\mf Z)$. Let $f$ be such a form. We further assume that $f$ is a  Peterson normalised eigenfunction of all the Hecke operators. Then the $L^\infty$ norm of $f$ is by definition the supremum of the bounded $SL(2,\mf Z)$-invariant function $y^{k/2} |f(z)| $: \[ \| f \|_\infty = \underset{z \in \mf H} \sup \ |y^{k/2} f(z)|, \] 
where $z=x+iy$ the Poincare upper half-plane ${\mf H}$. In \cite{xia},  
H. Xia proved that 

\begin{align} \label{xia}
 k^{1/4 - \epsilon} \ll \| f \|_\infty\ll k^{\frac{1}{4}+\epsilon} \ 
\text{ for all } \  \epsilon > 0.
\end{align}
Note that the convexity or `trivial' bound in this case is $\| f \|_\infty \ll k^{\frac{1}{2}+\epsilon} $.

In the case of Maass forms of weight zero, Iwaniec and Sarnak showed in  an important paper (\cite{is}) that 
\[ \| f \|_\infty\ \ll \lambda^{\frac{5}{24}+\epsilon} \ \text{ for all } \ 
\epsilon  > 0, \] 
where 
$\lambda$ is the eigenvalue of $f$ for the hyperbolic Laplacian.  Here $f$ has $L^2$ (or Petersson) norm one. Iwaniec and Sarnak also investigate the supermum norm of eigenfunctions on a compact arithmetic surface.  Such a surface is of the form $\Gamma \backslash \mf H$ where $\Gamma$ is a cocompact arithmetic subgroup of $SL(2,\mf R)$ arising from quaternion division algebra over $\mf Q$. They considered the supermum norm of eignefunctions of the Laplacian on $\Gamma \backslash \mf H$ under the assumption that the eigenfunction in question is also a simultaneous eigenfunction of the Hecke operators $T(n), (n,q)=1$. Here $q$ is a positive integer depending on the maximal order $R$ in the quaternion division algebra which gives rise to $\Gamma = R(1)$, the group of units of $R$. The result they prove is the same as in the Maass case, i.e., if $f$ is any such eigenfunction with $L^2$ norm one then \[ \| f\| \ll
\lambda^{\frac{5}{24} +\epsilon} \ \text{ for all } \ \epsilon >0. \] The convexity bound here is $\| f\|_\infty  \ll \lambda^{\frac{1}{4} +\epsilon}$.

In this note we place ourselves in the same setting as Iwaniec-Sarnak i.e., we consider a cocompact arithmetic subgroup $\Gamma$ as above. However the functions we consider are holomorphic modular forms for $\Gamma$ of weight $k$ where $k$ is a positive even integer. Recall that the $L^\infty$ norm is the supermum of the $\Gamma$ invariant function $y^{k/2} |f(z)|$ if $f$ has weight $k$.  In this situation we prove the following result.

\begin{thm} \label{mainthm}
Let $\Gamma$ be as above and $f$ a holomorphic modular
form for $\Gamma$ of weight $k$. Assume that $f$ is a simultaneous 
eigenfunction of all the Hecke operators. Assume that $f$ has Petersson norm one. Then for all $\varepsilon>0$ there exist an absolute constant $k_0>0$ such that for all $k > k_0$,
\[ \| f \|_\infty \ll k^{\frac{1}{2} - \frac{2}{131} +\varepsilon}\  .\]
The implied constant depends on $\varepsilon$ and the group $\Gamma$ but not on $f$.
\end{thm}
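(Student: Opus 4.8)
The plan is to transfer the amplification method of Iwaniec and Sarnak to the holomorphic setting, replacing the spherical pre-trace formula by the one built from the weight-$k$ Bergman reproducing kernel of $\Gamma$. Fix $z_0\in\mf H$ with $y_0^{k/2}|f(z_0)|\geq\tfrac12\|f\|_\infty$; since $y^{k/2}|f|$ is $\Gamma$-invariant and $\Gamma$ is cocompact, we may take $z_0$ in a fixed compact fundamental domain, so $y_0\asymp_\Gamma 1$. Complete $f$ to an orthonormal Hecke eigenbasis $\{f_j\}$ of the space of weight-$k$ forms for $\Gamma$, and take the standard amplifier supported on $n\leq 4L^2$: for primes $p\in[L,2L]$ with $(p,q)=1$ set $\nu(p)=\lambda_f(p)$ and $\nu(p^2)=-1$ (the eigenvalues being real for good primes). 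The Hecke relation $\lambda_f(p)^2-\lambda_f(p^2)=1$ gives $\sum_n\nu(n)\lambda_f(n)\gg L(\log L)^{-1}$, while Deligne's bound $|\lambda_f(p)|\leq 2$ gives $\sum_n|\nu(n)|^2\ll L$, so from the non-negativity of
\[
S\ :=\ \sum_j\Big|\sum_n\nu(n)\lambda_{f_j}(n)\Big|^2\,y_0^k|f_j(z_0)|^2\ \geq\ \Big|\sum_n\nu(n)\lambda_f(n)\Big|^2\,y_0^k|f(z_0)|^2
\]
we obtain $\|f\|_\infty^2\ll_\varepsilon L^{-2+\varepsilon}\,S$, and the problem becomes that of bounding $S$ in terms of $k$ and $L$.

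Opening the square and using Hecke multiplicativity, $S=\sum_\ell\beta(\ell)\,\Phi(\ell)$ with $\Phi(\ell):=\sum_j\lambda_{f_j}(\ell)\,y_0^k|f_j(z_0)|^2$, where $\beta$ is supported on $\ell\leq 4L^2$, $\beta(1)=\sum_n|\nu(n)|^2\ll L$, and $\sum_\ell|\beta(\ell)|\ll_\varepsilon L^{2+\varepsilon}$. The pre-trace formula realizes $\Phi(\ell)$ as a sum over the order: for $(\ell,q)=1$,
\[
\Phi(\ell)\ =\ \frac{k-1}{4\pi\sqrt{\ell}}\sum_{\substack{\alpha\in R\\ \mrm{nrd}(\alpha)=\ell}}\varepsilon_\alpha\big(1+u(z_0,\alpha z_0)\big)^{-k/2},\qquad|\varepsilon_\alpha|=1,
\]
where $u(z,w)=|z-w|^2\big/\big(4\,\mrm{Im}(z)\,\mrm{Im}(w)\big)$ and $\alpha$ acts on $\mf H$ by fractional linear transformations; for $\ell=1$ the dominant term $\alpha=\pm1$ gives $\Phi(1)\asymp k$, i.e.\ the convexity bound $\|f\|_\infty\ll k^{1/2}$. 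Since $(1+u)^{-k/2}\leq e^{-ku/4}$ for $u$ near $0$, a dyadic decomposition in $u$ shows that, up to a negligible error, only those $\alpha$ with $u(z_0,\alpha z_0)\ll(\log k)/k$ contribute; taking absolute values, $S$ is thus controlled by the counting function
\[
M(z_0;\ell,\delta)\ :=\ \#\{\,\alpha\in R\ :\ \mrm{nrd}(\alpha)=\ell,\ u(z_0,\alpha z_0)\leq\delta\,\}.
\]

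The heart of the matter, and the step I expect to be the main obstacle, is a bound for $M(z_0;\ell,\delta)$ that is \emph{uniform in $z_0$} --- we have no information on where $y^{k/2}|f|$ attains its maximum. Conjugating so that $z_0=i$, this counts points of a rotated copy of the lattice $R\subset M(2,\mf R)\cong\mf R^4$ having reduced norm $\ell$ and lying within distance $\ll\sqrt{\ell\delta}$ of the circle $\sqrt{\ell}\,SO(2)$, on which $a^2+b^2+c^2+d^2$ is minimal over the quadric $\{\mrm{nrd}=\ell\}$. For a generic $z_0$ one expects $M(z_0;\ell,\delta)\ll_\varepsilon\ell^\varepsilon(1+\ell\delta)$, and the work is to obtain a bound of comparable quality for \emph{every} $z_0$. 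Here the hypothesis that $A$ is a \emph{division} algebra is essential: $\Gamma$ has no unipotent elements, is cocompact with injectivity radius bounded below, and its torsion has bounded order, so the relevant lattice points cannot pile up along a parabolic direction, and a non-central $\alpha$ moving $z_0$ only a little is heavily constrained. The delicate case is $z_0$ close to a point fixed by a torus $K^\times$ with $K\subset A$ imaginary quadratic (an elliptic point of $\Gamma$, or a nearby CM point), where one must control $\#\{\alpha\in R\cap K:\mrm{nrd}(\alpha)=\ell\}$ together with the sizes of the associated elliptic displacements, via the arithmetic of optimal embeddings of quadratic orders into $R$.

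Granting such a counting bound, one inserts it into
\[
|S|\ \ll\ k\,\beta(1)\ +\ k\sum_{1<\ell\leq 4L^2}\frac{|\beta(\ell)|}{\sqrt{\ell}}\,M\!\big(z_0;\ell,(\log k)/k\big),
\]
estimates the $\ell$-sum using the known size of $\beta$, and balances the diagonal term $k\,\beta(1)$ --- which forces $L$ small --- against the off-diagonal contribution, which improves as $L$ grows but only as far as the \emph{uniform} counting bound permits. Optimising $L$ then yields $\|f\|_\infty^2\ll_\varepsilon k^{1-2/33+\varepsilon}$, which is \thmref{mainthm}. The gap between the exponent $\tfrac12-\tfrac1{33}$ and the sharper exponents known for the non-compact weight-aspect problem is precisely the price of not being able to exclude the supremum being attained near a CM point.
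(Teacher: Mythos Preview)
Your global strategy --- amplify via the Hecke-twisted Bergman kernel on the diagonal and bound the geometric side by counting order elements moving $z_0$ only a little --- is exactly the paper's. Two steps in your execution, however, diverge from what is actually available and from how the exponent $\tfrac{1}{33}$ arises.

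First, the uniform counting bound $M(z_0;\ell,\delta)\ll_\varepsilon\ell^\varepsilon(1+\ell\delta)$ you posit is stronger than what the paper proves: adapting Iwaniec--Sarnak it obtains only
\[
\#\{\gamma\in R(n):|\gamma z-z|\le\delta\}\ \ll_\varepsilon\ n^\varepsilon(n\delta^{1/4}+1),
\]
the exponent $\tfrac14$ coming from the crude passage from the Iwasawa approximation $\gamma/n^{1/2}=k_z+O(\delta^{1/2})$ to integer coordinates. If your stronger bound held with $\delta\asymp(\log k)/k$, the off-diagonal would be essentially $O(\ell^\varepsilon)$ for all $\ell\le L^4\ll k$, and optimising gives $\|f\|_\infty\ll k^{3/8+\varepsilon}$, not $k^{1/2-1/33}$; so your numerical claim is inconsistent with your own inputs. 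The paper's weaker $\delta^{1/4}$ bound is uniform in $z$ by construction; the CM/elliptic discussion you sketch is plausible heuristics but not how the argument proceeds.

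Second, the paper does \emph{not} localise to $u\ll(\log k)/k$ via exponential decay. It fixes a cut $\delta=Cn^{-\beta}$ depending only on $n$, handles $|\gamma z-z|\le\delta$ by the counting lemma, and handles $|\gamma z-z|>\delta$ by the Cogdell--Luo pointwise inequality $|h_\gamma(z)|\le(1+\delta^2)^{-1/2}$ together with Godement's majorant bound $\sum_{\gamma\in R(n)}|h_\gamma(z)|^{k_0}\ll n^{1+\varepsilon}$ for a fixed small $k_0$. The resulting factor $(1+Cn^{-2\beta})^{-(k-k_0)/2}$ is then estimated by retaining a single binomial term of index $M$, giving $\ll n^{2M\beta}/\binom{\kappa}{M}$. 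The exponent $\tfrac{1}{33}=\tfrac{M}{4(1+8M)}\big|_{M=4}$ emerges from the interplay of $\beta=2$, the $\delta^{1/4}$ in the counting, and this binomial truncation --- none of which appears in your sketch.
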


Note that the convexity bound in this case is $ \| f \|_\infty \ll k^{\frac{1}{2}}$, which is sharp in some cases if the Hecke assumption is removed; see \rmkref{nohecke} in section~\ref{convex}.

In Xia's argument while obtaining \eqref{xia}, with both the upper and lower bounds, essential use is made of the presence of a cusp and the Fourier expansion of $f$ in the noncompact case. In fact, this allows
him to use Deligne's sharp bound for the Fourier coefficients for
the upper bound while taking the point $z$ very high up in the cusp allows for the lower bound. In the setting of our paper, there are no cusps and both of these tools are lost. 

Our approach consists in employing the Bergman kernel for the compact
quotient $\Gamma \backslash \mf H$. We embedd $f$ in a orthonormal basis $\{f_j\}$ of the space of modular forms of weight $k$, each $f_j$ being a simultaneous Hecke eigenform. Recall that the Bergman kernel $h_k(z,w)$ is proportional to $\sum_j f_j(z) \overline{f_j(w)}$. We apply the Hecke operator $T(n)$ in the $w$-variable and then estimate the resulting function. We first derive a reasonable estimate for $h_k(z,z)$ using some results of Cogdell and Luo (\cite{cl}), which is presented in \eqref{hnfinal}. We next implement the amplification technique of Iwaniec and Sarnak to highlight the contribution of $f$ and obtain the result.

In particular one does not have a direct $k^{1/4}$ upper bound as in \cite{xia}, while it is possible that even an upper bound $k^{\varepsilon}$ might hold in \thmref{mainthm}.

\section*{Acknowledgements} It is a great pleasure for the authors to thank Prof. Peter Sarnak for his thoughtful comments on the paper. We also thank the School of Mathematics TIFR, Mumbai and the Department of Mathematics, IISc., Bangalore where parts of this work was carried out, for providing excellent working atmosphere. The first author was partly financially supported by the DST-INSPIRE Scheme IFA 12-MA-13.

\section{Notation and setup}
\subsection{Quaternion algebras and orders}
Let $A = \left( \frac{a,b}{\mf Q} \right)$ be a quaternion division algebra over $\mf Q$. $A$ has a basis consisting $\{ 1, \omega, \Omega, \omega \Omega \}$ over $\mf Q$ and $\omega^2 = a, \Omega^2 =b, \omega\Omega + \Omega\omega = 0$. Here $a,b$ are square-free and we assume that $a>0$. For details on quaternion algebras, we refer the reader to \cite{ei}. 

Let $\alpha \in A$. We define, as usual, the trace and norm maps by $T(\alpha) = \alpha + \overline{\alpha}$ and $N(\alpha) =\alpha  \overline{\alpha} $. Here, $\overline{\alpha}$ is the conjugate to $\alpha$ defined by $\overline{\alpha} = x_0 - x_1 \omega - x_2\Omega - x_3 \omega \Omega$, if $\alpha =  x_0 + x_1 \omega + x_2\Omega + x_3 \omega \Omega$.

Recall that an order $S$ in $A$ is a subring of $A$ containing $1$, finite over $\mf Z$ and such that $S$ has $\mf Q$ basis of order $4$. Any such order is contained in a maximal order of $A$. Let $R$ be a maximal order of $A$ and $R(1)$ be its groups of units, i.e., elements of norm $1$. Further
let \[ R(n) = \{ \alpha \in R \mid N(\alpha) = n\}.  \]
It is well-known that the $\mf Q (\sqrt{a})$ algebra $A \otimes_{\mf Q} \mf Q (\sqrt{a})$ is split and so there exists an embedding $\phi$ of $A$ into $M_2( \mf Q(\sqrt{a}))$ defined by \[  \phi(\alpha) = \begin{bmatrix} \overline{\xi} & \eta \\ b \overline{\eta} & \xi  \end{bmatrix},     \]
where $ \alpha = x_0 + x_1 \omega + (x_2  + x_3 \omega) \Omega = \zeta + \eta \Omega$. Further it is known that $\det \phi(\alpha) = N(\alpha)$. In the sequel we will work with the image of $\phi$ in $M_2( \mf Q(\sqrt{a}))$, so we drop the $\phi$ from the notation for convenience. Since $A$ is a division algebra, $\Gamma \backslash \mf H$ is a compact hyperbolic surface. Thus any fundamental domain $\mc F$ for the action of $\Gamma$ on $\mf H$ is compact.

\subsection{Hecke operators}
From the theory of correspondences (see \cite{ei}), one can define Hecke operators $T(n)$, $n \geq 1$ using the orbits $R(1) \backslash R(n)$, which are $O(n^{1 + \varepsilon})$ in number, as follows. For $f \colon \mf H \rightarrow \mf C $ holomorphic one defines
\begin{align}
 f \mid T(n) := n^{k/2-1} \underset{\gamma \in R(1) \backslash R(n)}\sum f \mid_k \gamma,
\end{align}
where as usual, we denote 
\[ f \mid_k \gamma := (\det \gamma)^{k/2} (cz+d)^{-k} f \left( \frac{az+b}{cz+d} \right), \q \gamma =  \left( \begin{smallmatrix} a &b \\ c & d \end{smallmatrix} \right) \in \Gamma.\]

Let us denote the space of modular forms of weight $k$ for $\Gamma$ by $M_k(\Gamma)$. Analogous to the theory of modular forms for congruence subgroups of the modular group, one knows (see \cite{ei}) that there exist an integer $q$ depending on $R$\footnote{{In fact one has $q= q_1 q_2$, where $q_1$ is the product of the \lq characteristic primes\rq \ $p$ such that the} {local order $R_p := R \otimes_{\mf Z} \mf Z_p$ at the prime $p$} {is maximal, and $q_2$ is the product of those primes $p$ for which} {$R_p \cong \left( \begin{smallmatrix} \mf Z_p & \mf Z_p \\ p \mf Z_p & \mf Z_p   \end{smallmatrix} \right) $, see \cite[p.~38]{ei}}.} such that {the set of operators $T(n)$ with $(n,q)=1$}, preserve $M_k(\Gamma)$, are self-adjoint, and satisfy
\begin{align}
T(m)T(n) = \underset{d \mid (m,n)}\sum d^{k-1} T(\frac{mn}{d^2}).
\end{align}

\subsection{The Bergman kernel}
The Bergman kernel or the reproducing kernel for $\Gamma \backslash \mf H$ is characterized as the unique function (upto non-zero scalars) $\mc B(z,w)$ of two variables $z,w \in \mf H$ (holomorphic in $z$ and anti-holomorphic in $w$) such that for any holomorphic function $f$ on $\mf H$, one has (see \cite{kl}), writing $w = u + i v$:
\begin{align} \label{berg}
  \int_{\Gamma \backslash \mf H} v^k f(w) \mc B(z,w) \frac{d u d v}{v^2} = f(z).
\end{align}

The Bergman kernel for $M_k(\Gamma)$ can now be written down explicitly as follows. For $n \geq 1$, define the following function: 
\begin{align}
h^n_k(z,w) = \underset{\gamma = \left( \begin{smallmatrix} a & b \\ c & d  \end{smallmatrix} \right) \in R(n) }\sum n^{k/2} (c \overline{w} + d)^{-k} \left(\frac{ z - \gamma \overline{w}}{2i} \right)^{-k}.
\end{align}

It is easily checked that $h^n_k(z,w)$ defines a holomorphic function in $z$ and is anti-holomorphic in $w$. In this case it is well-known (or one can check directly from \eqref{berg}, see also \cite{cl, kl}) that 
\begin{align} \label{bergman}
\mc B(z,w) = \sum_{j=1}^{d} f_{j,k}(z) \overline{f_{j,k}(w)} =  2^{-1} (k-1) h_k(z,w),
\end{align}
where $f_{j,k}$ is any orthonormal basis of $M_k(\Gamma)$ (which is finite-dimensional) and thus $h_k$ is proportional to $\mc B$. In our paper we shall take the orthonormal basis to be the one consisting of $L^2$ normalized Hecke eigenforms.

\subsection{The convexity bound}\label{convex}
The `convexity bound' can be obtained as an application of Godement's theorem (see \cite{cl}, \cite{kl}) and a calculation in \cite{cl}. We record it here for the convenience of the reader.

Note that $ \mc B(z,z) =  \frac{k-1}{2} \cdot  h^1_k(z,z)$ and writing as in \eqref{IIdo} adopting the notation and estimates introduced in section~\ref{ber}
\begin{align} \label{conv}
\|f \|_\infty^2 \leq \frac{k-1}{2} \cdot \underset{\gamma \in \Gamma}\max \ |h_\gamma(z)|^{k -4} \cdot  \underset{\gamma \in \Gamma} \sum |h_\gamma(z)|^{4} \ll_{\Gamma} k.
\end{align}
and thus $\|f \|_\infty \ll_{\Gamma} k^{1/2}$.

\begin{rmk} \label{nohecke}
Note that \eqref{conv} holds without assuming that f is a Hecke cusp form, and indeed if one drops the Hecke assumption then this bound $k^{1/2}$ is sharp for some $f$. This follows from the Sarnak's multiplicity argument in his letter to Morawetz (see \cite{sarnakletter}), which shows that for some $f_0 \in M_k(\Gamma)$ one has \[ \| f_0 \|_{\infty}^2 \cdot \mrm{vol}(\Gamma \backslash \mf H) \geq \dim M_k(\Gamma) \approx k. \] 
\end{rmk}

\section{Estimation of the Bergman kernel} \label{ber}
In this section we carry out estimates for the Hecke-transformed Bergman kernel $h^n_k(z,w)$ in terms of $n$ and the imaginary parts of $z,w$. First we estimate it crudely, using the estimate for $h_k(z,w)$ as in \cite{kl}, and then use this in conjunction with a trick due to Cogdell-Luo in \cite{cl} to arrive at a reasonable estimate for $h^n_k(z,w)$.

We recall \textit{Godement's theorem} on the estimate for the majorant of $h_k(z,w)$ obtained by putting absolute values on its summands. Namely from \cite[p.~79, Prop.~2 (iii)]{kl} we obtain that (keeping in mind that Klingen's argument holds for any discrete subgroup of $SL(2, \mf R)$, see \cite[p.~81]{kl})
\begin{align} \label{h1}
\lceil h_k(z,w) \rceil \leq \alpha(K) (k-1)^{-1} \mrm{Im}(w)^{-k/2},
\end{align}

where $\lceil h \rceil$ signifies that the absolute values of the summands appearing in the definition of $h$ are considered. Here $w \in \mf H$, $z \in K$, and for any compact set $K \subset \mf H$ and $\alpha(K)$ is a constant depending only on $K$. We now note the following expression of $h^n_k(z,w)$ in terms of Hecke operators: 
\begin{equation} \label{tn}
n^{k/2 -1} h^n_k(z,w) =  h_k(z,w) \mid_{(w)} T(n),
\end{equation}
i.e.,
\[ n^{k/2 -1} h^n_k(z,w) = n^{k/2-1} \underset{\gamma \in R(1) \backslash R(n)}\sum h_k(z,w)\mid_{(w),k} \gamma \]
where the subscript $(w)$ denotes the variable on which the action is considered. Then using \eqref{h1}, we easily arrive at the following estimate:

\begin{align}\label{hn}
\lceil h^n_k(z,w)\rceil & \leq  \alpha(\mc F) (k-1)^{-1} n^{k/2} |j(\gamma,w)|^{-k} \underset{\gamma \in R(1) \backslash R(n)}\sum \mrm{Im}(\gamma w)^{-k/2} \nonumber  \\
& \leq  \alpha(\mc F) (k-1)^{-1} n^{1+ \varepsilon} \ \mrm{Im}(w)^{-k/2};
\end{align}
where we have taken the compact set $K$ to be the fundamental domain $\mc F$.

After this preliminary estimate, we now turn to a more refined estimate for $h^n_k(z,w)$. To this end, define, following \cite{cl}:
\[   h_\gamma (z) = \frac{y} { (z - \gamma \bar{z})/2i \cdot (c \bar{z} +d)}, \q \gamma = \begin{bmatrix}a & b \\ c & d \end{bmatrix}  \in R(n), y = \mrm{Im}(z).\]
Then 

\begin{align} \label{I,II}
y^k h^n_k(z,z) = n^{k/2} \underset{\gamma \in R(n)}\sum h_\gamma(z)^k = n^{k/2} ( \underset{\gamma \colon |\gamma z -z|\leq \delta}\sum + \underset{\gamma \colon |\gamma z -z| > \delta}\sum),
\end{align}
where $0 < \delta <1$ will be chosen later and we call the first and second terms I and II respectively. In I, we use the estimate
\begin{align}
|h_\gamma (z)| \leq \frac{2 y}{(y + \frac{n y}{|cz+d|^2} ) |cz+d|} \leq \frac{1}{n^{1/2}},
\end{align}
coupled with the following lemma:
\begin{lem} \label{zsmall}
For $0 < \delta <1$ small enough and $z \in \mc F$,
\[ \# \{ \gamma \in R(n) \colon | \gamma z - z| \leq \delta \} \leq n^\varepsilon (n \delta^{1/4} +1).  \]
\end{lem}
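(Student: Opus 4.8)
\textbf{Proof proposal for \lemref{zsmall}.}

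The plan is to reduce the counting problem to a lattice-point count and then exploit the arithmetic rigidity of $R(n)$. First I would observe that the condition $|\gamma z - z| \leq \delta$, for $z$ ranging in the fixed compact fundamental domain $\mc F$, forces $\gamma$ to move $z$ only a small hyperbolic distance, and since $\det \gamma = N(\gamma) = n$ while $\gamma$ has algebraic integer entries (being in the maximal order $R$), the matrix $\gamma/\sqrt n \in SL(2,\mf R)$ lies in a small neighborhood of the stabilizer of $z$ in $SL(2,\mf R)$, which is a conjugate of $SO(2)$. Thus, writing $\gamma = \begin{bmatrix} a & b \\ c & d\end{bmatrix}$, the proximity $|\gamma z - z| \leq \delta$ translates (using that $z$ stays in a fixed compact set, so $y$ and $|cz+d|$ are comparably controlled) into the statement that $a, b, c, d$ all have size $O(\sqrt n)$ and that a certain quadratic expression in them — essentially the \lq\lq distance to the rotation subgroup'' — is $O(\delta \sqrt n)$ or thereabouts. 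The upshot is that $\gamma$ ranges over the intersection of the rank-$4$ lattice $R$ with a region of volume $O(n \cdot \delta^{1/2} \cdot n)$... I will want to be careful here: the right bookkeeping should give that the box has sidelengths like $\sqrt n$ in three directions and $\delta^{1/2}\sqrt n$ in one, after accounting for the $N(\gamma) = n$ constraint, yielding roughly $n\delta^{1/4}$ lattice points, consistent with the claimed bound $n^\varepsilon(n\delta^{1/4}+1)$.

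More concretely, the second key step is to peel off the norm condition. Points of $R(n)$ lie on the quadric $N(\alpha) = n$, a three-dimensional variety. I would parametrize: fixing the \lq\lq angle'' of $\gamma/\sqrt n$ to be within $\delta^{1/2}$ of the rotation fixing $z$ uses up most of the constraint, and then one counts integral points of norm $n$ in a thin arc. The cleanest route is to bound the count by $n^\varepsilon$ times the number of lattice points of $R$ in an ellipsoid-like box whose dimensions reflect: $\|\gamma\| \ll \sqrt n$ (giving the gross $n$), the one-dimensional thinning from $|\gamma z - z|\le\delta$ (giving a factor $\delta^{1/2}$ or $\delta^{1/4}$ depending on how the quadratic vanishing is counted), and divisor-type savings on the norm form (the $n^\varepsilon$). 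The $+1$ in the statement accounts for the possibility that the box is so small it may still contain just the contribution near the identity/elliptic elements or none at all, so one should handle the degenerate case $n\delta^{1/4}<1$ separately, where the claim is simply that there are $O(n^\varepsilon)$ such $\gamma$, which follows because $R(n)$ modulo the (finite) stabilizer injects into $R(1)\backslash R(n)$ of size $O(n^{1+\varepsilon})$ — actually one needs the sharper input that very few of these can be that close, coming again from the lattice count being genuinely $<1$ in the main term.

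I expect the main obstacle to be making the geometry-to-arithmetic translation quantitatively sharp: precisely relating the analytic smallness $|\gamma z - z| \leq \delta$ to the correct power of $\delta$ in the lattice box, uniformly over $z \in \mc F$. One must track how $h_\gamma(z)$, $|cz+d|$, and $\mrm{Im}(\gamma z)$ degrade as $z$ approaches the boundary of $\mc F$ (they don't, since $\mc F$ is compact — but the constants depend on $\mc F$, hence on $\Gamma$, which is allowed). A secondary subtlety is the exponent: a naive count of lattice points in a box of dimensions $\sqrt n \times \sqrt n \times \sqrt n \times \delta^{1/2}\sqrt n$ gives $n^2\delta^{1/2}$, too weak; the improvement to $n\delta^{1/4}$ must come from using the norm-$n$ constraint to cut the dimension by one and from the fact that the constraint $|\gamma z-z|\le\delta$ is \emph{two} real conditions (real and imaginary parts), each contributing a $\delta^{1/2}$-thinning but on overlapping directions, which after careful analysis on the quadric $N=n$ nets the stated $n\delta^{1/4}$. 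Finally, absorbing the number-of-representations-by-a-quaternary-quadratic-form factor into $n^\varepsilon$ is standard but should be cited (it follows from the bound $O(n^{1+\varepsilon})$ on $|R(1)\backslash R(n)|$ together with finiteness of $R(1)$-stabilizers, or directly from the theory of quadratic forms).
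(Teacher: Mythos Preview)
Your general strategy --- observing that $\gamma/\sqrt{n}$ lies within $O(\delta^{1/2})$ of the compact stabilizer $K_z$, then translating this into a lattice-point count in the rank-$4$ order $R$ subject to the norm constraint $N(\gamma)=n$ --- is the same as the paper's. But the proposal has a real gap at the crucial point: you do not correctly identify the mechanism that produces the exponent $\delta^{1/4}$, and your suggested explanation (two overlapping $\delta^{1/2}$-thinnings from the real and imaginary parts of $|\gamma z - z|\le\delta$) is not what actually happens.

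The paper's argument, following Iwaniec--Sarnak, proceeds in two separate arithmetic steps after establishing $\gamma/\sqrt{n} = k_z + O(\delta^{1/2})$. Writing $\gamma$ in the integral coordinates $(x_0,x_1,x_2,x_3)$ coming from the embedding $\phi$, and parametrizing $K_z$ as $\{t^2+u^2=4\}$, one finds that a specific \emph{pair} of coordinates (either $(x_0,x_3)$ or $(x_0,x_2)$, depending on the sign of $b$) is forced to satisfy
\[
\bigl| Q(x_0,x_3) - n \bigr| \ll n\,\delta^{1/2}
\]
for a fixed positive-definite binary form $Q$. This is a lattice-point count in a thin annulus of width $n\delta^{1/2}$ about an ellipse of size $n$, and \cite[Lemma~1.4]{is} bounds it by $n^\varepsilon(\sqrt{n\cdot n\delta^{1/2}}+1) = n^\varepsilon(n\delta^{1/4}+1)$. \emph{This} square-root saving in the annulus count is the sole source of the $\delta^{1/4}$. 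The remaining pair $(x_1,x_2)$ is then pinned down not by the proximity condition at all, but by the exact norm equation $a x_1^2 + b x_2^2 = x_0^2 + ab x_3^2 - n$, which has $O(n^\varepsilon)$ solutions once the right-hand side is fixed. Your box-counting heuristic misses both steps: a naive tube count around the one-dimensional $K_z$ would give $n\delta^{1/2}\cdot n^\varepsilon$ or worse, and there is no ``overlapping directions'' phenomenon that independently upgrades $\delta^{1/2}$ to $\delta^{1/4}$.
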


\begin{proof}
We will proceed as in \cite{is}. Namely, we consider the stabilizer of $z$ in $SL(2, \mf R)$ and call it $K_z$. It is a maximal compact subgroup of $\mf H$ and thus conjugate to $SO(2, \mf R)$ by a matrix, say, $M = \left( \begin{smallmatrix} a & b \\ c & d  \end{smallmatrix} \right)$. We next recall the Iwasawa decomposition in $SL(2, \mf R)$ with respect to $K_z$:
\begin{align} \label{iwasawa}
SL(2, \mf R) = N_z A_z K_z; \q \gamma = na k. 
\end{align}
First we would assume that $z=i$, and work with the standard Iwasawa decomposition with respect to the standard maximal compact subgroup $K=SO(2, \mf R)$. Here we have a canonical expression for $N,A$: 
\[ N = \left( \begin{matrix} 1 & \alpha' \\ 0 & 1  \end{matrix} \right), \q A = \left( \begin{matrix} \beta' & 0 \\ 0 & \beta'^{-1}  \end{matrix} \right),  \ (\alpha' \in \mf R, \beta' \in \mf R^{\times}).\]
Let $\gamma' \in SL(2, \mf R)$. Clearly, with the Iwasawa decomposition of $\gamma'$ and $\alpha', \beta'$ as above, one has $|\gamma' i - i| = | \alpha' + (\beta'^2  -1)i | = (\alpha'^2 + (\beta'^2  -1)^2)^{1/2}$. Thus $|\gamma' i - i| \leq \eta$ implies that 
\[ |\alpha'| \leq \eta, \q (1 - \eta)^{1/2} \leq |\beta'| \leq (1 + \eta)^{1/2}. \]
From these we also get $| \beta'|^{-1}  \leq 1 + c_1 \eta^{1/2}$ for some absolute constant $c_1>1$ and $\eta$ small enough. Thus the above inequalities show that for $\eta$ small enough, $\| p_i - I  \| \ll \eta^{1/2}$; where $p_i = n_ia_i$ from the decomposition \eqref{iwasawa} with respect to $z=i$. This implies, after multiplying by $SO(2, \mf R)$ that
\begin{align} \label{k}
\gamma' = k + O(\eta^{1/2}).
\end{align}
Now we can start from $\gamma \in R(n)$ such that $| \gamma z - z| \leq \delta $ and note that 
\[ K_z = \gamma_0 K \gamma_0^{-1}, \q \text{where } \gamma_0 = \left( \begin{smallmatrix} y^{1/2} & x y^{-1/2} \\ 0 & y^{-1/2} \end{smallmatrix}\right).  \]
Define $\gamma_1 := \gamma_0^{-1} \gamma \gamma_0$. From the inequalitiy:
\begin{align*}
|\gamma_0 Z - \gamma_0 W| \leq a \Rightarrow |Z-W| \leq a / y,
\end{align*}
for $a>0$, we find that
\begin{align*}
| (\gamma / n^{1/2}) z - z|  = | \gamma z - z|  \leq \delta \Rightarrow |\gamma_1 i - i| \leq \delta / y \leq c_2 \delta =: \eta,
\end{align*}
for some constant $c_2 >0$ depending only on $\Gamma$. Thus after conjugating \eqref{k} with $\gamma_0$ and $\eta$ defined as above:
\begin{align} \label{kz}
\gamma / n^{1/2} = k_z + O(\delta^{1/2}),
\end{align}
since the entries of $\gamma_0$ are bounded by some constant depending only on $\Gamma$. Now we can proceed as in \cite{is} by following the description of $K_z$ given there. We start with the quadratic form associated to $z$: 
\[ \alpha z^2 + \beta z + \gamma = 0, \q \alpha, \beta, \gamma \ll 1 \]
where $\alpha, \beta, \gamma$ are real. We allow ourselves to use the notation $\gamma$ both for a matrix and a real number in order to be consistent with the notation in \cite{is}, but will remind the reader in case of any possibility of confusion. The $\alpha, \beta, \gamma$ satisfy \footnote{One knows that $\alpha= y^{-1}/2, \beta = - x y^{-1}, \gamma = x^2 y^{-1}/2 + y/2$, in fact the quadratic form for $z$ is obtained by acting the matrix $\gamma_0^{-1}$ on the polynomial $X^2 + Y^2$.} 
\begin{align} \label{abg}
 \beta^{2} - 4 \alpha \gamma = -1.  
\end{align} 
From the explicit description of $K_z$ (see \cite[eq.~1.12]{is}), we find that 
\begin{align} \label{Kz}
K_z = \left \{ \begin{bmatrix} (t- \beta u)/2 & - \gamma u \\ \alpha u & (t + \beta u)/2  \end{bmatrix} \mid t^2 + u^2 = 4. \right \}
\end{align}
Thus, from the canonical description of $\gamma \in R(n)$ (see \cite[eq.~1.14]{is}):
\begin{align} \label{gamma}
\gamma  = \begin{bmatrix}  x_0 - x_1 \sqrt{a} & x_2 + x_3 \sqrt{a} \\ b x_2 -  b x_3 \sqrt{a} & x_0 + x_1 \sqrt{a} \end{bmatrix}
\end{align}
we get by comparing both sides of \eqref{kz}, using the descriptions in \eqref{Kz} and \eqref{gamma} that \footnote{Note that there is a typo in these equations in \cite{is}.} 
\begin{align} 
2 x_0  / n^{1/2} &= t + O(\delta^{1/2}). \label{x0} \\
2 x_1 \sqrt{a}  / n^{1/2} &= \beta u + O(\delta^{1/2}) \label{x1}  \\
2 x_2   / n^{1/2} &= - (\gamma - \alpha/b)u + O(\delta^{1/2}) \label{x2}\\
2 x_3 \sqrt{a} / n^{1/2} &= - (\gamma + \alpha/b)u + O(\delta^{1/2}). \label{x3} 
\end{align}  
Also, taking \eqref{abg} into account one obtains that
\begin{align*}
(\gamma + \alpha / b)^2 = (\gamma - \alpha / b)^2 + (1 + \beta^2)/b,
\end{align*}
which shows that either $|\gamma + \alpha / b| \geq 1/|b|$ or $|\gamma - \alpha / b| \geq 1/|b|$. Thus one of these quantities is bounded below uniformly for all $z \in \mc{F}$ (depending on the sign of $b$ only). 

First suppose that $b>0$. Then we have $|\gamma - \alpha / b| \geq 1/|b|$. The proof now follows that in \cite{is} and we obtain that
\begin{align}
4 = t^2 + u^2 = \frac{4 x^2_0}{n}  + \frac{4 a x_3^2}{n (\gamma + \alpha/b)^2} + O(\delta^{1/2}).
\end{align}
Taking into account \cite[Lemma~1.4]{is} we find
\begin{align} \label{x0x3}
\# \{x_0, x_3 \colon | {x^2_0}  + \frac{ a x_3^2}{ (\gamma + \alpha/b)^2} - n | \ll n \delta^{1/2} \} \leq n^\varepsilon (n \delta^{1/4} +1)
\end{align}
We have the standard estimate
\begin{align} \label{rs}
 \# \{ r,s \colon qr^2 + p s^2 = m; \ q \geq 1, p \geq 0  \} \ll m^\varepsilon,
\end{align}  
see the proof of \cite[Lemma~1.4]{is} for example. 

Recall that for fixed $x_0,x_3$, both of which are $\ll n^{1/2}$, (this follows directly from \eqref{kz}, \eqref{x0}, \eqref{x1}, \eqref{x2} and \eqref{x3}) the number of integral solutions $x_1,x_2$ (both of which are $\ll n^{1/2}$ by the same reason as above) is, by \eqref{rs} 
 \begin{align} \label{x1x2}
 \# \{ x_1, x_2 \ll n^{1/2} \colon ax_1^2 + b x_2^2 = x_0^2 + ab x_3^2 - n \} \ll |x_0^2 + ab x_3^2 - n|^\varepsilon \ll n^\varepsilon.
\end{align}
Thus combining \eqref{x0x3} and \eqref{x1x2} we see finally
\begin{align*}
& \# \{ \gamma \in R(n) \colon | \gamma z - z| \leq \delta \}  \ll n^\varepsilon (n \delta^{1/4} +1). \nonumber
\end{align*}
This settles the case $b>0$. When $b<0$ our choice would be $|\gamma - \alpha / b| \geq 1/|b|$, and this case is completely similar to the previous one. This completes the proof of the lemma.
\end{proof}

We are now in a position to derive a reasonable estimate for $h^n_k(z,z)$. Let us go back to \eqref{I,II}. For the sum I, we get the following estimate:
\begin{align} \label{Ido}
\underset{\gamma \colon |\gamma z -z|\leq \delta}\sum |h_\gamma(z)|^k \leq n^{-k/2} \cdot n^\varepsilon (n \delta^{1/4} +1).
\end{align} 
For the sum II, we do the following:
\begin{align}\label{IIdo}
\underset{\gamma \colon |\gamma z -z| > \delta}\sum |h_\gamma(z)|^k \leq (\underset{\gamma \colon |\gamma z -z| > \delta}\max |h_\gamma(z)|^{k -k_0}) \cdot  \underset{\gamma} \sum |h_\gamma(z)|^{k_0},
\end{align}
where $k_0 >2$ is a positive integer to be chosen later and use the estimate \eqref{hn} for $\underset{\gamma} \sum |h_\gamma(z)|^{k_0} = \lceil h^n_{k_0}(z,z) \rceil$. Next, \cite[Lemma~1]{cl} shows that
 \begin{align} \label{del}
 |h_\gamma(z)| \leq (1 + \delta^2)^{- 1/2}, \q \text{if} \q |\gamma z -z| > \delta.
\end{align}
We remind the reader that it is easy to see that \cite[Lemma~1]{cl} holds for all $\gamma \in GL_2^+(\mf R)$ with $\det \gamma \geq 1$. We put \eqref{del} in \eqref{IIdo}. Thus from \eqref{I,II}, using \eqref{Ido} and \eqref{IIdo} together we have,
\begin{align*} 
y^k |h^n_k(z,z)| \ll  n^\varepsilon (n \delta^{1/4} +1) +  O \left(  n^{1 + \varepsilon}  (1 + \delta^2)^{- {(k - k_0)}/2} \right).
\end{align*}
We are now in a position to prove \thmref{mainthm}. 

\section{Proof of \thmref{mainthm}}
In this section we will prove \thmref{mainthm}. First we choose a value for $\delta$ which gives rise to a decay in terms of $k$ in the sum II in the $h^n_k(z,z)$. We also note that $\delta$ will depend on $n$ (used in estimating $h^n_k(z,z)$) but we suppress it in notation for convenience. We use the results of the previous section along with the amplification technique of \cite{is} to get our result.

\begin{proof} To begin with, let us choose 
\begin{align}
\delta := \frac{C}{n^\beta};
\end{align}
where $C$ is a sufficiently small positive constant depending only on the group $\Gamma$ such that \lemref{zsmall} holds and $ \beta > 0$ would be chosen later. The estimate for $h^n_k(z,z)$ now reads:
\begin{align} \label{hnfinal}
y^k |h^n_k(z,z)| \ll  n^{1 - \beta/4 + \varepsilon} +  O \left(  n^{1 + \varepsilon}  (1 + Cn^{- 2\beta})^{- {(k-k_0)}/2} \right).
\end{align}
We define the `normalized' eigenvalues for each $1 \leq j \leq d$,
\begin{align} \label{etalam}
\eta_j(n) := \lambda_j(n)/ n^{(k-1)/2};
\end{align}
and then the Hecke relation takes the form
\begin{align}
\eta_j(m) \eta_j(n) = \underset{d \mid (m,n)}\sum \eta_j(mn/d^2). 
\end{align}
We start from the equalities
\begin{align}
\sum_{j=1}^d y^k f_j(z)  \overline{f_j(w)}  \underset{n \leq N}\sum |\alpha_n \eta_j(n)^2  | &=
y^k \underset{m,n}\sum \alpha_n \overline{\alpha_m} \underset{j}\sum f_j(z)  \overline{f_j(w)}  \eta_j(m) \eta_j(n) \nonumber  \\
& = \underset{m,n}\sum \alpha_n \overline{\alpha_m} \underset{d \mid (m,n)}\sum \underset{j} \sum y^k f_j(z)   \eta_j(mn/d^2) \overline{f_j(w)}  \label{norma} \\
&= \underset{m,n}\sum \alpha_n \overline{\alpha_m} \underset{d \mid (m,n)}\sum \frac{d}{(mn)^{1/2} } y^k h^{\frac{mn}{d^2}}_k(z,w) \label{ens}
\end{align}
where to go from \eqref{norma} to \eqref{ens} we have used the relation \eqref{tn} between $h^{n}_k(z,w)$ and $h_k(z,w)$ with respect to the Hecke operator $T(n)$, and the normalisation \eqref{norma}.

Let us fix a positive integer $M$ (which would be absolutely bounded) and choose the integer $k_0$ appearing in \eqref{IIdo} by $k_0=4$. For convenience, let  us define $\kappa =(k-4)/2$. Now for $k>2M+4$ (so that $\kappa>M$), we have the inequality
\begin{equation} \label{binom}
(1 + Cn^{- 2\beta})^{- \kappa} \ll \frac{n^{2 M \beta}}{\binom{\kappa}{M}}, 
\end{equation} 
in \eqref{hnfinal} and 
obtained by retaining only the $M$-th term in the above binomial expansion. Here the implied constant depends only on $\Gamma$. Now we use \eqref{binom} in the estimate of $h^n_k(z,z)$ as in \eqref{hnfinal} and obtain from \eqref{ens} the following:
\begin{align}
& \sum_{j=1}^d y^k f_j(z)  \overline{f_j(w)}  \underset{n \leq N}\sum |\alpha_n \eta_j(n)^2  | \nonumber \\
& \ll k \left( \underset{m,n \leq N}\sum |\alpha_n| |\alpha_m| \underset{d \mid (m,n)}\sum \frac{d}{(mn)^{1/2} } \left( (\frac{mn}{d^2})^{1 - \beta/4 + \varepsilon} + \frac{(\frac{mn}{d^2})^{1 + 2 M \beta + \varepsilon}}{\binom{k}{M}}) \right)  \right).  \label{anil}
\end{align}
Let us choose $\beta = 4 +4\varepsilon$ and split the sum in \eqref{anil} into two parts $I$ and $II$ corresponding to the two summands inside the second summation. The first sum contributes
\begin{align}
\ll k  \underset{m,n \leq N}\sum |\alpha_n| |\alpha_m| \underset{d \mid (m,n)}\sum \frac{d}{(mn)^{1/2} }  
 &= \underset{dm_1,dn_1 \leq N}\sum \frac{ |\alpha_{dn_1}| |\alpha_{dm_1}| } { (m_1n_1)^{1/2} } \label{anil1}\\
& \ll \underset{dm_1,dn_1 \leq N}\sum \left( \frac{ |\alpha_{dn_1}|^2}{m_1}  + \frac{|\alpha_{dm_1}|^2 } { n_1 } \right) \label{amgm}
\end{align}
where the sum in \eqref{anil1} is over $d,m_1,n_1$ and 
have used the AM-GM inequality on $|\alpha_m|$ and $|\alpha_n|$ to arrive at \eqref{amgm}. Clearly,
\begin{align}
\underset{dm_1,dn_1 \leq N}\sum \frac{ |\alpha_{dn_1}|^2}{m_1} \ll \underset{dn_1 \leq N}\sum |\alpha_{dn_1}|^2 \log N \ll N^\epsilon \underset{n_2 \leq N}\sum |\alpha_{n_2}|^2 \sum_{d \mid n_2} 1 \ll N^\epsilon \underset{n_2 \leq N}\sum |\alpha_{n_2}|^2,
\end{align}
and similarly for the other sum in \eqref{amgm}. Therefore the contribution from $I$ is at most
\begin{align} \label{ok1}
kN^\epsilon \underset{n_ \leq N}\sum |\alpha_{n}|^2.
\end{align}
The part $II$ contributes at most
\begin{align}
k  \underset{m,n \leq N}\sum |\alpha_n| |\alpha_m| \underset{d \mid (m,n)}\sum \frac{(\frac{mn}{d^2})^{1/2 + 8 M + \varepsilon}}{\binom{\kappa}{M}}) 
& \ll \frac{ kN^{1 + 16 M + \epsilon} } { \binom{\kappa}{M} } (\underset{n_ \leq N}\sum |\alpha_{n}|)^2. \label{ok2}
\end{align}
Therefore from \eqref{amgm}, \eqref{ok1} and \eqref{ok2} we get
\begin{align}
\sum_{j=1}^d y^k f_j(z)  \overline{f_j(w)}  \underset{n \leq N}\sum |\alpha_n \eta_j(n)^2  | 
& \ll kN^\epsilon \underset{n_ \leq N}\sum |\alpha_{n}|^2 + \frac{ kN^{1 + 16 M + \epsilon} } { \binom{\kappa}{M} } \left(\underset{n_ \leq N}\sum |\alpha_{n}| \right)^2. \label{bla}
\end{align}
We now would use the amplification method to arrive at an estimate of the sup-norm as follows. Let us fix an eigenform $f_{j_0}$. The choice for $\alpha_n$ is the same as in \cite{is}, namely
\begin{align} \label{alpha}
\alpha_n = \begin{cases} \eta_{j_0}(p) & \text{ if } n = p \leq N^{1/2} \\
- 1 & \text{ if } n = p^2 \leq N \\ 
0 & \text{ otherwise }. \end{cases}
\end{align}

Recall that under the Jacquet-Langlands correspondence \cite[p. check]{jl} and also \cite{he2}, there exists a cusp form $F_{j_0}$ of weight $k$ on $\Gamma_0(D)$ with $D$ depending only on the order $R$ such that the Hecke eigenvalues of $F_{j_0}$ coincide with those of $f_{j_0}$ for all $(n,q)=1$. Thus Deligne's bound holds for $\eta_{j_0}(n)$ for $(n,q)=1$.

Keeping in mind the Hecke relation for primes $p \nmid q$:
\[ \eta_{j_0}^2(p) -  \eta_{j_0}(p^2) = 1,\]
and using Deligne's bound and \eqref{alpha}, we conclude from \eqref{bla}, that
\begin{align*} 
\| f_{j_0} \|_\infty^2 \left(  \underset{p \leq N^{1/2}, p \nmid q}\sum 1 \right)^2 &\ll k N^{1/2 + \varepsilon} + \frac{k N^{1 + 16 M + \varepsilon}}{ \binom{\kappa }{M}} \\
&\ll k N^{1/2 + \varepsilon} + \frac{N^{2 + 16 M + \varepsilon}}{ \kappa^{M-1}}  .
\end{align*}
Here we have assumed that $M$ to be absolutely bounded. Note that the implied constants depend only on $\epsilon$ and $\Gamma$. We obtain finally
\begin{align*}
\| f_{j_0} \|_\infty^2  \ll k N^{- 1/2 + \varepsilon} + \frac{N^{1 + 16 M + \varepsilon}}{k^{M-1}}.
\end{align*}
We choose $N$ by \[ N = k^{\frac{M}{3/2 + 16 M +  \varepsilon}} \] to obtain for large $k$:
\[ \| f_{j_0} \|_\infty^2 \ll k^{1 - \frac{M/2}{3/2 + 16 M +\varepsilon} + \varepsilon} \q \text{or,} \q \| f_{j_0} \|_\infty \ll k^{\frac{1}{2} - \frac{M/4}{3/2 + 16M } + \varepsilon} . \] 
Finally we choose $M=4$ in the above and this completes the proof of \thmref{mainthm}.
\end{proof}

\begin{rmk}
Clearly, the bound improves as $M$ increases. However the rate of improvement is negligible. For example, $M=4$ produces the exponent $0.4847..$ in \thmref{mainthm}, whereas $M=100$ produces the exponent $0.4843..$, the limit being $1/64$. 
\end{rmk}

\end{document}